\newtheorem{thm}{Theorem}%[section]
\newtheorem{cor}[thm]{Corollary}
\newtheorem{lem}[thm]{Lemma}
\newtheorem{claim}[thm]{Claim}
\def\R{\mathbb{R}}
\begin{document}

\title{Octants are Cover Decomposable}
\author{Bal\'azs Keszegh and D\"om\"ot\"or P\'alv\"olgyi
%\footnote{ELTE, Budapest and EPFL, Lausanne. %\newline  Ecole Polytechnique Fédérale de Lausanne
%Supported by OTKA NK 67867.}
}
%\date{}
\maketitle

\begin{abstract}
We prove that octants are cover-decomposable, i.e., any $12$-fold covering of any subset of the space with a finite number of translates of a given octant can be decomposed into two coverings. As a corollary, we obtain that any $12$-fold covering of any subset of the plane with a finite number of homothetic copies of a given triangle can be decomposed into two coverings. We also show that any $12$-fold covering of the whole plane with open triangles can be decomposed into two coverings. However, we exhibit an indecomposable $3$-fold covering.
\end{abstract}

\medskip

\section{Introduction}
Let ${\cal P}=\{\ P_i\ |\ i\in I\ \}$ be a collection of geometric sets in $\R^d$.
We say that ${\cal P}$ is an {\em $m$-fold covering} of a set $S$ if every point of $S$ is contained in at least $m$ members of $\cal P$. A $1$-fold covering is simply called a {\em covering}. 

\medskip

\noindent {\bf Definition.} A geometric set $P\subset \R^d$ is said to be {\em cover-decomposable}\footnote{In the literature the definition is slightly different and the notion defined here is sometimes called {\em finite-cover-decomposable}, however, to avoid unnecessary complications, we simply use {\em cover-decomposable}.} if there exists a (minimal) constant $m=m(P)$ such that
every $m$-fold covering of any subset of $\R^d$ with a finite number of translates of $P$ can be decomposed into two coverings of the same subset. Define $m$ as the cover-decomposability constant of $P$.

\medskip

The simplest objects to examine are the orthants %\footnote{In the rest of this paper all geometric objects are open unless otherwise stated.}
 of $\R^d$. It is easy to see that a quadrant ($2$-dimensional orthant) is cover-decomposable. Cardinal \cite{C11} noticed that orthants in $4$ and higher dimensions are not cover-decomposable as there is a plane on which their trace can be any family of axis-arallel rectangles and it was shown by Pach, Tardos and T\'oth \cite{PTT09} that such families might not be decomposable into two coverings. Cardinal asked whether octants ($3$-dimensional orthants) are cover-decomposable. Our main result is an affirmative answer (the proof is in Section \ref{secmain}). 

\begin{thm}\label{mainthm} Octants are cover-decomposable, i.e., any $12$-fold covering of any subset of $\R^3$ with a finite number of translates of a given octant can be decomposed into two coverings.
\end{thm}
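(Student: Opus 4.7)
\emph{Reframe as dominance.} Without loss of generality take the octant to be $O=\{(x,y,z):x\le 0,\ y\le 0,\ z\le 0\}$ and identify each translate $O+v$ with its apex $v\in\R^3$, so that the translate covers a point $q$ iff $v\ge q$ componentwise. By a standard compactness argument it is enough to prove the statement when the sets $V$ of apices and $Q$ of query points are finite, and by a tiny generic perturbation we may assume all coordinates are distinct. The task becomes: $2$-color $V$ so that for every $q\in Q$ with $|\{v\in V:v\ge q\}|\ge 12$, both color classes contain an element of $V$ that dominates $q$.

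\emph{Sweep along $z$.} Sort the apices as $v_1,\dots,v_n$ by strictly decreasing third coordinate. Key observation: for a query $q$ with third coordinate $q_3$, the apices that dominate $q$ are exactly the $v_i$ with $i\le i_0(q)$ (those above $q$ in $z$) \emph{and} with $(v_i)_{1,2}\ge(q_1,q_2)$. So the $z$-coordinate becomes a sweep parameter, and at every cross-section we face a two-dimensional quadrant-dominance problem in the $xy$-plane. This is the reduction from three dimensions to a family of two-dimensional problems that we intend to exploit.

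\emph{Use a $2$D staircase/wedge structure.} For each query $q$ consider only the Pareto-maximal apices among $\{v_i:i\le i_0(q),\ (v_i)_{1,2}\ge(q_1,q_2)\}$; these form a staircase in the $xy$-plane, and $q$ is dominated by every apex appearing on some step of that staircase. The 12-fold condition guarantees a large enough staircase so that, after excising degeneracies, we can localise the constraint around $q$ to a ``wedge-like'' structure (essentially a bottomless rectangle with its top at infinity), for which a small-constant 2-decomposability result is available. Building the coloring greedily while sweeping $z$ from $+\infty$ downwards and maintaining, at every moment, that the staircase of already-colored apices is balanced in a carefully chosen 2D sense is the intended mechanism.

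\emph{Where the work is.} The main obstacle is to show that the \emph{same} global $2$-coloring of $V$ can satisfy the constraints of all queries simultaneously; locally, each query only wants both colors to appear among a handful of its dominators, but the dominating sets of different queries overlap in intricate ways. The role of the constant $12$ is to absorb the losses in combining the sweep with the 2D argument: one factor coming from the known constant for 2-decomposability of the 2D auxiliary family (quadrants/bottomless rectangles), further factors from having to split into two or three sub-problems depending on where $q$ sits relative to the current staircase, and an additive slack to start the induction cleanly. Once the 2D auxiliary decomposability lemma is identified and proved (or cited), the 3D statement should follow by combining it with the $z$-sweep as sketched, with Step~3 (the wedge localisation) being the most delicate part of the bookkeeping.
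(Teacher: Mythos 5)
Your setup is on target: you correctly dualize to a dominance/coloring problem, you correctly observe that sorting by the $z$-coordinate turns the $3$D octant problem into a family of nested $2$D quadrant problems in the $xy$-plane (this is precisely the reduction the paper makes in its Claim~\ref{project}), and you correctly intuit that a staircase of Pareto-maximal points is the organizing structure. However, what you describe as ``where the work is'' is in fact the entire proof, and your plan gives no argument for it. In particular, after the projection the $2$D subproblem is not a static quadrant-decomposability problem that you can look up; it is a \emph{dynamic} one: you must produce a single $2$-coloring of the projected points $p_1',\dots,p_n'$ such that for \emph{every} prefix $P_t'$ and \emph{every} quadrant containing at least $12$ points of $P_t'$, both colors appear. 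No off-the-shelf $2$-decomposability result for quadrants or bottomless rectangles gives you this for free. The bottomless-rectangle result you gesture at (constant $3$ or $4$, from \cite{wcf}) only covers the special case in which the projected points are pairwise incomparable; in that case the paper does invoke an argument of that type (Claim~\ref{atlonclaim}), but the general case is precisely the case where comparable projected points exist, and that is where the new ideas are needed.

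Concretely, what is missing is the combinatorial core: the paper incrementally builds a \emph{forest} $G$ on the projected points (not a direct coloring) while maintaining a staircase $S_t$ of pairwise incomparable points and four explicit invariants --- all points strictly above the staircase are ``good'' (every quadrant containing them contains an edge of $G$), all staircase points are ``almost good,'' the points strictly below the staircase form an antichain, and no quadrant confined to the region below the staircase contains more than $3$ points. Updates are performed by a small case analysis (Steps (a)--(d)) that either attaches a newly added point to a staircase point, or promotes one or two points below the staircase onto it while adding carefully chosen edges. Any proper $2$-coloring of the resulting forest is then shown to be good, and the constant $12$ falls out of a clean count: a wedge with no good point and at most $2$ staircase points can see at most $3$ disjoint below-staircase wedges of at most $3$ points each, hence at most $2+3\cdot3=11$ points. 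Your plan asserts that ``the 3D statement should follow by combining [a 2D lemma] with the $z$-sweep,'' but without this invariant-maintaining construction the argument does not close, the bookkeeping that yields $12$ is not derivable, and the claim that the greedy sweep can be made consistent across all prefixes simultaneously remains unproved.
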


The intersection of the translates of the octant containing $(-\infty,-\infty,-\infty)$ with the $x+y+z=0$ plane gives the homothetic copies of an equilateral triangle. Since any triangle can be obtained by an affine transformation of the equilateral triangle we obtain 

\begin{cor}\label{cortriangle} Any $12$-fold covering of any subset of the plane with a finite number of homothetic copies of any given triangle can be decomposed into two coverings.
\end{cor}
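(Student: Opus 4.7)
The plan is the reduction outlined in the paragraph preceding the corollary: lift the planar problem about triangles to the three-dimensional problem about octants solved in Theorem~\ref{mainthm}. First, I reduce to the case of an equilateral triangle: for any triangle $T$ there is an affine bijection $A\colon\R^2\to\R^2$ sending $T$ to a fixed equilateral triangle, and $A$ carries positive homothets of $T$ bijectively to positive homothets of $A(T)$ while preserving incidences, so a decomposition in the image plane pulls back to one in the original plane and it suffices to handle the equilateral case.

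Next I set up the lift. Let $\Pi = \{(x,y,z)\in\R^3 : x+y+z = 0\}$, identified with the plane in which the triangles live, and fix the octant $O = \{(x,y,z)\in\R^3 : x\le 0,\;y\le 0,\;z\le 0\}$. For any $v=(a,b,c)\in\R^3$ the intersection $(O+v)\cap\Pi$ is nonempty iff $a+b+c\ge 0$, and in that case it is the triangle with vertices $(a,b,-a-b)$, $(a,-a-c,c)$, $(-b-c,b,c)$; each edge has length $(a+b+c)\sqrt 2$, so it is equilateral and a positive homothet of the fixed triangle $T_0:=(O+(1,1,1))\cap\Pi$. A direct coordinate check shows that the map $v\mapsto (O+v)\cap\Pi$ is a bijection between $\{v\in\R^3 : a+b+c>0\}$ and the set of positive homothets of $T_0$. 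Choosing $T_0$ as the equilateral triangle produced in the first step, I replace every member $T'\in{\cal P}$ of the given covering by the corresponding translate of $O$, obtaining a family $\widetilde{{\cal P}}$ of translates of $O$ in $\R^3$.

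Because $p\in T'$ iff $p\in O+v$ whenever $p\in\Pi$ and $T'=(O+v)\cap\Pi$, the multiplicity with which any point $p\in S\subset\Pi$ is covered is the same in ${\cal P}$ and in $\widetilde{{\cal P}}$. Hence $\widetilde{{\cal P}}$ is a $12$-fold covering of $S$, viewed as a subset of $\R^3$, by translates of the octant $O$. Applying Theorem~\ref{mainthm} yields a partition of $\widetilde{{\cal P}}$ into two coverings of $S$, which transfers back via the bijection to the desired $2$-coloring of ${\cal P}$. There is no hard step; the only care needed is verifying the elementary coordinate computation underlying the bijection, and noting that \emph{homothetic} in the corollary means positively homothetic, so only one orientation of the triangle appears, matching the single fixed octant orientation of Theorem~\ref{mainthm} (the opposite orientation would be handled identically by using the opposite octant).
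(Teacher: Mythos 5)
Your proposal is correct and is exactly the argument the paper has in mind: the paper gives only the one-sentence sketch preceding the corollary (intersect translates of the octant with the plane $x+y+z=0$ to get homothets of an equilateral triangle, then reduce general triangles to equilateral via an affine map), and you have simply fleshed out the coordinate details of that same reduction.
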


Using standard compactness arguments, this implies the following (the proof is in Section \ref{locsct}):

\begin{thm}\label{locthm} Any locally finite\footnote{We say that a covering is {\em locally finite} if every compact set intersects only a finite number of covering sets, i.e. homothetic copies of the given triangle, in our case.}, $12$-fold covering of the whole plane with homothetic copies of a triangle is decomposable into two coverings.
\end{thm}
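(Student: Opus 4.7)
The plan is to deduce the statement from Corollary \ref{cortriangle} by a standard compactness argument. Let $\mathcal{T}$ be the given locally finite $12$-fold covering of $\R^2$; local finiteness makes $\mathcal{T}$ countable, and I fix an enumeration $T_1, T_2, \ldots$ of its elements.

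For each positive integer $n$, let $D_n$ be the closed disk of radius $n$ around the origin and let $\mathcal{F}_n \subseteq \mathcal{T}$ be the subfamily of triangles that intersect $D_n$. By local finiteness $\mathcal{F}_n$ is finite, and since every point of $D_n$ lies in at least $12$ triangles of $\mathcal{T}$ and each such triangle necessarily meets $D_n$, the subfamily $\mathcal{F}_n$ is a $12$-fold covering of $D_n$. Corollary \ref{cortriangle} then produces a $2$-coloring $c_n \colon \mathcal{F}_n \to \{0,1\}$ whose two color classes each cover $D_n$. Extend each $c_n$ arbitrarily (say, by $0$) to all of $\mathcal{T}$, obtaining a sequence $(c_n)$ in $\{0,1\}^{\mathcal{T}}$.

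Because $\mathcal{T}$ is countable, $\{0,1\}^{\mathcal{T}}$ with the product topology is compact and metrizable, so some subsequence $c_{n_k}$ converges pointwise to a coloring $c \colon \mathcal{T} \to \{0,1\}$. To verify that $c$ is the desired decomposition, fix any $p \in \R^2$ and pick $N$ with $p \in D_N$. For each $n_k \geq N$, the fact that both color classes of $c_{n_k}$ cover $D_{n_k} \supseteq D_N$ provides triangles $T^{(0)}_k, T^{(1)}_k \in \mathcal{F}_{n_k}$ containing $p$ with $c_{n_k}(T^{(j)}_k) = j$. Each such triangle lies in the finite set $\mathcal{T}_p = \{T \in \mathcal{T} : p \in T\}$, so by passing to a sub-subsequence I may assume both sequences are eventually constant at some $T^{(0)}, T^{(1)} \in \mathcal{T}_p$. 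Pointwise convergence of $c_{n_k}$ then forces $c(T^{(j)}) = j$, so both color classes of $c$ contain a triangle covering $p$.

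The argument is routine; the only place demanding care is the use of local finiteness twice---once to make each $\mathcal{F}_n$ finite (so Corollary \ref{cortriangle} applies) and once more at the end to make the pigeonhole argument inside $\mathcal{T}_p$ valid. An equivalent formulation would be via K\"onig's lemma, applied to the tree whose level-$n$ nodes are the valid $2$-colorings of $\mathcal{F}_n$ and whose edges are restriction (a valid coloring of $\mathcal{F}_{n}$ restricts to a valid coloring of $\mathcal{F}_{n-1}$ because triangles in $\mathcal{F}_n \setminus \mathcal{F}_{n-1}$ contribute nothing to the cover of $D_{n-1}$); an infinite branch then assembles to the desired global coloring.
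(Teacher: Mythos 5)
Your proof is correct and follows essentially the same compactness scheme as the paper: the paper phrases it via K\"onig's Infinity Lemma applied to the tree of valid colorings of the finite subfamilies $\mathcal{F}_n$, which is exactly the reformulation you note at the end, while your main write-up packages the same argument as sequential compactness of $\{0,1\}^{\mathcal{T}}$. Both routes use local finiteness in the same two places (finiteness of each $\mathcal{F}_n$, and finiteness of $\mathcal{T}_p$ to transfer the covering property to the limit), so there is no substantive difference.
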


The analogs of Corollary \ref{cortriangle} and Theorem \ref{locthm} for translates of a given triangle were proved with a bigger constant by Tardos and T\'oth \cite{TT07} using a more complicated argument\footnote{The original proof gave $m=43$ which was later improved by \'Acs \cite{A10} to $m=19$ which is still worse than our $12$.}.
Following their idea, using Theorem \ref{locthm} for translates of a given open triangle we obtain

\begin{cor}\label{corplane}
Any $12$-fold covering of the whole plane with the translates of an open triangle is decomposable into two coverings. 
\end{cor}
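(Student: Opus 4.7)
The plan is to derive the corollary from Theorem \ref{locthm} via compactness. Let $\mathcal{F}$ be a $12$-fold covering of $\R^2$ by translates of an open triangle $T$. The strategy is to extract from $\mathcal{F}$ a locally finite subfamily $\mathcal{F}' \subseteq \mathcal{F}$ that still $12$-covers $\R^2$, and then invoke Theorem \ref{locthm} (applied to these translates viewed as homothetic copies of ratio one). That theorem produces a partition $\mathcal{F}' = \mathcal{F}_1' \sqcup \mathcal{F}_2'$ into two coverings; setting $\mathcal{F}_1 := \mathcal{F}_1'$ and $\mathcal{F}_2 := \mathcal{F} \setminus \mathcal{F}_1'$ then gives the required decomposition of the original $\mathcal{F}$, since both $\mathcal{F}_1'$ and $\mathcal{F}_2'$ already cover $\R^2$.

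The extraction of $\mathcal{F}'$ uses openness of $T$ in a standard way. For any point $p$ and any $12$ translates from $\mathcal{F}$ containing $p$, their common intersection is an open neighborhood of $p$. Hence every compact $K \subseteq \R^2$ is covered by such open neighborhoods, and by compactness finitely many translates of $\mathcal{F}$ already form a $12$-fold cover of $K$. Applying this to the compact annuli $D_n := \overline{B(0,n) \setminus B(0,n-1)}$ for $n \geq 2$ and $D_1 := \overline{B(0,1)}$, I pick a finite subfamily $\mathcal{F}_n \subseteq \mathcal{F}$ that $12$-covers $D_n$ and set $\mathcal{F}' := \bigcup_n \mathcal{F}_n$. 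By construction $\mathcal{F}'$ is a $12$-fold cover of $\R^2$.

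Local finiteness of $\mathcal{F}'$ follows from the boundedness of $T$. If $T$ has diameter $d$, then any $T + v \in \mathcal{F}_n$ must have $v$ within distance $d$ of $D_n$, so $T+v$ itself lies within distance $d$ of $D_n$. A fixed compact $K \subseteq B(0,R)$ can then meet such a $T+v$ only when $D_n$ is within distance $d$ of $B(0,R)$, i.e., only for finitely many indices $n$. Since each $\mathcal{F}_n$ is finite, only finitely many members of $\mathcal{F}'$ meet $K$.

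The only real obstacle is this local-finiteness extraction, but openness of $T$ renders it a routine compactness exercise. All substantive content lives in Theorem \ref{locthm}; this corollary is essentially a compactness wrapper that eliminates the local-finiteness hypothesis of that theorem in the case of open translates.
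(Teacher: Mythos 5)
Your proof is correct and matches the approach the paper only sketches (the paper disposes of this corollary in one sentence by referring to Tardos and T\'oth's idea and Theorem~\ref{locthm}). The key ingredient, exactly as you identify, is that openness of $T$ makes each point of a compact set lie in a nonempty open intersection of $12$ triangles from $\mathcal{F}$, so compactness extracts a \emph{finite} $12$-cover of that compact set; summing over an exhausting sequence of annuli yields a locally finite $12$-fold subcover, to which Theorem~\ref{locthm} applies, and the leftover triangles can be dumped into either color class. One small imprecision worth tightening: the sentence ``any $T+v\in\mathcal{F}_n$ must have $v$ within distance $d$ of $D_n$'' presumes $v\in T+v$ (i.e., $0\in T$), which need not hold, and more importantly presumes that $T+v$ actually meets $D_n$. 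The latter is true by construction --- the compactness argument picks only triangles that contain some point of $D_n$ --- and it is this fact (rather than a statement about $v$) that directly gives ``$T+v$ lies within distance $d$ of $D_n$.'' With that phrasing fixed, the local-finiteness argument is airtight.
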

%They also show in \cite{TT07} the following result with a weaker constant.

Our result brings the task to determine the exact cover-decomposability constant of triangles in range. Tardos and T\'oth state that they cannot even rule out the possibility that the cover-decomposability constant of triangles is $3$. To complement our upper bound, in Section \ref{seclower} we show a construction proving that the constant is actually at least $4$.%, even in their setting, i.e. in case of translates of a given triangle.

Our proof of Theorem \ref{mainthm} in fact proves the following equivalent, dual\footnote{For more on dualization and for the proof of equivalence, see the surveys \cite{P10} and \cite{PPT11}.} form of Theorem \ref{mainthm}.

\begin{thm}\label{dualthm} Any finite set of points in $\R^3$ can be colored with two colors such that any translate of a given octant with at least $12$ points contains both colors.
\end{thm}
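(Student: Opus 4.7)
The plan is to prove Theorem~\ref{dualthm} directly by reducing it to a two-dimensional online coloring problem via a sweep in the $z$-coordinate. Without loss of generality, take the octant to be $O = \{(x,y,z) : x,y,z \le 0\}$, and assume the points of $P$ have pairwise distinct coordinates in each axis (which may be arranged by a generic perturbation that does not alter the collection of subsets cut out by translates of $O$). A translate $O + t$ with $t = (a,b,c)$ then meets $P$ in exactly $\{p \in P : x_p \le a,\ y_p \le b,\ z_p \le c\}$.

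List the points as $p_1, \dots, p_n$ in increasing order of $z$-coordinate. For every apex $(a,b,c)$ there is an index $i$ (depending only on $c$) such that the points with $z$-coordinate $\le c$ are exactly $\{p_1, \dots, p_i\}$, and the octant's trace on $P$ becomes $\{p_j : j \le i,\ x_j \le a,\ y_j \le b\}$. Hence the theorem is equivalent to an online statement: color the points $p_1, p_2, \dots$ one by one, choosing each color irrevocably at the time of insertion, so that after every step $i$ every planar quadrant $\{(x,y) : x \le a,\ y \le b\}$ containing at least $12$ of the points $p_1, \dots, p_i$ contains points of both colors.

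To solve this online two-dimensional problem I would maintain a layered staircase decomposition of the currently processed points: the first staircase is the set of coordinatewise maximal points, the second is the set of maximal points in the remainder, and so on. Each staircase carries a natural linear order by $x$-coordinate (equivalently, reverse order by $y$-coordinate). The coloring idea is to preserve, on every staircase, an alternating pattern along this linear order, and to choose the color of each arriving point so that this invariant is restored after the (local) restructuring that an insertion triggers.

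The main obstacle will be showing that the constant $12$ suffices and that the alternation invariant can indeed be maintained consistently online. A new point may either enter the top staircase, thereby demoting some previously maximal points to lower staircases, or drop directly into a lower one; tracking how colors propagate through these updates while simultaneously guaranteeing that any quadrant with $\ge 12$ current points meets some staircase in a sufficiently long contiguous block to be forced bichromatic is the technical heart of the argument. A delicate case analysis relating a quadrant's apex $(a,b)$ to the staircase layers it crosses, combined with a careful accounting of how the arrival of a point affects alternation on each layer, is what will pin the constant down to $12$.
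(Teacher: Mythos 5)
Your reduction to a two-dimensional ``prefix'' problem via the $z$-sweep is exactly the paper's Claim~\ref{project}, so the opening of your argument is correct. But from that point on the proposal diverges and, more importantly, stops short precisely where the difficulty lies. The idea of maintaining an alternating color pattern on layered staircases, with each point colored irrevocably at insertion, runs into a concrete obstruction: if a current staircase layer reads $R,B,R,B$ in $x$-order and a new point arrives between the second and third entries, alternation would require the new point to be simultaneously $R$ (to alternate with the $B$ on its left) and $B$ (to alternate with the $R$ on its right). You cannot recolor the old points, so the invariant cannot be restored. The paper flags this explicitly, remarking that a purely online approach (choosing colors without using the known future arrival order in a structural way) is impossible. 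It therefore does \emph{not} try to keep any staircase perfectly alternating; instead it builds an auxiliary forest $G$ whose proper $2$-coloring is taken at the very end, and it maintains only the weaker local properties that points above a single staircase are ``good'' (a wedge containing them contains an edge of $G$), staircase points are ``almost good'' (a wedge containing them plus their neighbors contains an edge), and that points below the staircase are pairwise incomparable with the $\le 3$-points-per-wedge bound. Those four invariants, together with the observation that a wedge avoiding the interior and touching at most two staircase points can be split into three sub-wedges below the staircase, give $2+3\cdot3=11$ as the monochromatic maximum.

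So the genuine gap is that you have described the easy reduction and a program, not a proof: nothing in the proposal explains how alternation survives insertion, how multiple staircase layers interact, or where the constant $12$ would come from. In fact the alternating-staircase heuristic only works cleanly in the special case where all projected points are pairwise incomparable (the paper's Claim~\ref{atlonclaim}, which uses a \emph{partial} alternating coloring with deferred decisions to get the constant $4$); the general case requires abandoning alternation in favor of the forest construction and the good/almost-good bookkeeping. As it stands the proposal would not yield Theorem~\ref{dualthm}.
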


Finally, we mention the dual of Corollary \ref{cortriangle}, which is not equivalent to Corollary \ref{cortriangle} but follows from Theorem \ref{dualthm} the same way as Corollary \ref{cortriangle} follows from Theorem \ref {mainthm}.

\begin{cor} Any finite planar point set can be colored with two colors such that any homothetic copy of a given triangle that contains at least $12$ points contains both colors.
\end{cor}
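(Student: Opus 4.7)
The plan is to deduce this corollary from Theorem \ref{dualthm} in exactly the same manner that Corollary \ref{cortriangle} was obtained from Theorem \ref{mainthm}. Let $S \subset \R^2$ be the given finite point set and $T$ the given triangle. Since any non-degenerate affine transformation carries $T$ to an equilateral triangle while preserving every incidence between points and homothets of $T$ (and sending homothets to homothets), I may assume without loss of generality that $T$ is equilateral.

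I would then identify the plane carrying $S$ with the slice $H = \{(x,y,z) \in \R^3 : x+y+z = 0\}$, scaled so that $T$ is the intersection with $H$ of a suitable translate of the octant $O = \{(x,y,z) : x \le 0,\ y \le 0,\ z \le 0\}$. As recorded in the discussion preceding Corollary \ref{cortriangle}, the translates of $O$ cut out on $H$ precisely the homothetic copies of an equilateral triangle, giving a bijection between the translates of $O$ (with apex satisfying $a+b+c \ge 0$) and the positive homothets of the triangle. Because every lifted point of $S$ lies on $H$, a point of $S$ is contained in a given homothet of $T$ if and only if the corresponding translate of $O$ contains its lift. Applying Theorem \ref{dualthm} to the lifted point set then yields a two-coloring for which every translate of $O$ containing at least $12$ lifted points is bichromatic, and transferring this coloring back to $S$ gives exactly the property required.

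I do not anticipate a genuine obstacle: the entire content of the corollary is already carried by Theorem \ref{dualthm}, and the reduction is the same short bookkeeping used to pass from Theorem \ref{mainthm} to Corollary \ref{cortriangle}. The only items to verify carefully are that every positive homothet of $T$ really does arise as the $H$-trace of some translate of $O$ (which follows by parameterising translates of $O$ by their apex $(a,b,c)$ and noting that the edge length of the trace is a monotone function of $a+b+c$ ranging over all non-negative values), and that the affine reduction is legitimate (which it is because affine maps preserve both point-triangle containment and the family of homothets).
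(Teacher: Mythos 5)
Your proposal is correct and follows precisely the route the paper indicates: lift the (affinely normalized) planar point set into the plane $x+y+z=0$, observe that translates of the octant trace out exactly the positive homothets of the equilateral triangle there, and apply Theorem \ref{dualthm} to the lifted set. This is the same short reduction the paper uses for Corollary \ref{cortriangle}, just spelled out explicitly.
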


For more results on cover-decomposability, see the recent surveys \cite{P10} and \cite{PPT11}.

\section{Proof of Theorem \ref{mainthm} and \ref{dualthm}} \label{secmain}
Denote by $W$ the octant with apex at the origin containing $(-\infty,-\infty,-\infty)$.
We will work in the dual setting, that is we have a finite set of points, $P$,
in the space, that we want to color with two colors such that any translate of $W$ with at least $12$ points contains both colors. If we can do this for any $P$, then it follows using a standard dualization argument (see \cite{P10} or \cite{PPT11}) that $W$ (and thus any octant) is cover-decomposable. So from now on our goal will be to show the existence of such a coloring.

For simplicity, suppose that no number occurs multiple times among the coordinates of the points of $P$ (otherwise, by a small perturbation of $P$ we can get such a point set, and its coloring will be also good for $P$). Denote the point of $P$ with the $t^{th}$ smallest $z$ coordinate by $p_t$ and the union of $p_1,\ldots,p_t$ by $P_t$. First we will show how to reduce the coloring of $P_t$ to a planar and thus more tracktable problem.

Denote the projection of $P$ on the $z=0$ plane %which is orthogonal to the $z$ axis and passes through the origin, this way we get the planar point set
by $P'$. Similarly denote the projection of $p_t$ by $p_t'$, the projection of $P_t$ by $P_t'$ and the projection of $W$ by $W'$. Therefore $W'$ is the quadrant with apex at the origin containing $(-\infty,-\infty)$.

\begin{claim}\label{project} We can color the points of the planar point set, $P'$, with two colors such that for any $t$ and any translate of $W'$ containing at least $12$ points of $P_t'$, it is true that the intersection of this translate and $P_t'$ contains both colors if and only if we can color the points of the spatial point set, $P$, with two colors such that for any translate of $W$ containing at least $12$ points of $P$, it is true that the intersection of this translate and $P$ contains both colors.
\end{claim}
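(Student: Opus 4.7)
The plan is to reduce the spatial coloring problem to a parametrized planar one via vertical projection. The key observation is that a point $p_i=(x_i,y_i,z_i)$ lies in a translate $W+v$ with apex $v=(v_x,v_y,v_z)$ if and only if $p_i'\in W'+(v_x,v_y)$ and $z_i\le v_z$. Writing $t=|\{i:z_i\le v_z\}|$, the points satisfying the $z$-condition are exactly those of $P_t$, so vertical projection gives a bijection $(W+v)\cap P\longleftrightarrow (W'+(v_x,v_y))\cap P_t'$. Conversely, for any $t\in\{1,\dots,n\}$ and any translate $W'+w$, choosing any $v_z\in[z_t,z_{t+1})$ (with $z_{n+1}:=+\infty$) and setting $v=(w_x,w_y,v_z)$ realizes this planar trace as $(W+v)\cap P$ under the same bijection. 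This lift/project correspondence is the whole engine of the proof.

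For the ``only if'' direction I would start with a valid $2$-coloring of $P$ and transport it to $P'$ by coloring $p_i'$ with the color of $p_i$; since the coordinates of $P$ are all distinct, projection is injective and the coloring of $P'$ is well-defined. Given any $t$ and any translate $W'+w$ with at least $12$ points of $P_t'$, the lift described above produces a translate $W+v$ whose $P$-trace has the same cardinality and projects bijectively onto $(W'+w)\cap P_t'$; the hypothesis on $P$ forces both colors to appear in $(W+v)\cap P$, and the bijection transports this to $(W'+w)\cap P_t'$.

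For the ``if'' direction I would lift a valid $2$-coloring of $P'$ to $P$ coordinatewise: color $p_i$ with the color of $p_i'$. Given any translate $W+v$ with at least $12$ points of $P$, set $t=|\{i:z_i\le v_z\}|$; the projection bijection shows that $(W'+(v_x,v_y))\cap P_t'$ has at least $12$ points as well, so the hypothesis on $P'$ guarantees both colors, and the bijection transports this back to $(W+v)\cap P$.

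I do not expect a genuine obstacle: the claim is a bookkeeping equivalence asserting that the spatial incidence structure ``translates of $W$ meeting $P$'' coincides, via vertical projection, with the parametrized planar family ``translates of $W'$ meeting $P_t'$''. The only subtlety worth flagging is why the parameter $t$ must be quantified over \emph{all} values rather than just $t=n$: an arbitrary spatial apex involves an arbitrary $z$-threshold, which in the planar picture corresponds precisely to choosing which initial segment $P_t\subseteq P$ (ordered by $z$-coordinate) participates in the planar trace, so restricting to a single $t$ would miss most translates of $W$.
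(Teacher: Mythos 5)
Your argument is correct and is essentially the same as the paper's: establish the bijection between translates of $W$ and pairs $(t,\text{translate of }W')$ via the $z$-coordinate threshold, then transport colorings through it by giving $p_i$ and $p_i'$ the same color. The paper states this more tersely, but the underlying observation and both directions of the equivalence are identical to what you wrote.
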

\begin{proof}
Clearly, if we take a translate of $W$ with apex $w$ having $z$ coordinate bigger than the $z$ coordinate of $p_t$ and smaller than the $z$ coordinate of $p_{t+1}$, then the projection of the intersection of this translate with $P$ is equal to the intersection of $P_t'$ with the translate of $W'$ having apex $w'$, the projection of $w$.
Thus having a good coloring for one problem gives a good coloring for the other if we give $p_t$ and $p_t'$ the same color for every $t$.
\end{proof}

In the rest of this paper, such a coloring of a planar point set is called a {\em good coloring}. 
Now we will prove that any $P'$ has a good coloring, thus establishing Theorem \ref{dualthm} and since they are equivalent, also Theorem \ref{mainthm}.
To avoid going mad, we will omit the apostrophe in the following, so we will simply write $W$ instead of $W'$ and so on.
Also, we will use the term {\em wedge} to denote a translate of $W$. 

A possible way to imagine this planar problem is that in every step $t$ we have a set of points, $P_t$, and our goal is to color the coming new point, $p_{t+1}$, such that we always have a good coloring.
We note that this would be impossible in an online setting, i.e. without knowing in advance which points will come in which order. But using that we know in advance every $p_i$ makes the problem solvable.

We start by introducing some notation.
%Denote by $W(p)$ the wedge whose apex is at $p$. Denote by $W(S)$ the smallest wedge containing every point of $S$.
%For $p,q\in P$, define $p\prec q$ if there is a $t$ such that $p\in P_t, q\notin P_t$, that is, $q$ is after $p$ in our order.
If $p_x<q_x$ but $p_y>q_y$ then we
say that $p$ is NW from $q$ and $q$ is SE from $p$. In this case we call $p$ and $q$
incomparable. Similarly, $p$ is SW from $q$ (and $q$ is NE from $p$) if and only if %$p \in W(q)$ iff
both coordinates of $p$ are smaller than the respective coordinates of $q$.

Instead of coloring the points, we will rather define on them a bipartite graph $G$, whose proper two-coloring will give us a good coloring. Actually, as we will later see, this graph will be a forest. 

We define $G$ recursively, starting with the empty set and the empty graph. At any step $j$ we define a graph $G_{j}$ on the points of $P_{j}$ and also maintain a set $S_{j}$ of pairwise incomparable points, called the {\em staircase}. Thus, before the $t^{th}$ step we have a graph $G_{t-1}$ on the points of $P_{t-1}$ and a set $S_{t-1}$ of pairwise incomparable points. In the $t^{th}$ step we add $p_t$ to our point set obtaining $P_t$ and we will define the new staircase, $S_t$, and also the new graph, $G_t$, which will have $G_{t-1}$ as a subgraph. Before the exact definition of $S_t$ and $G_t$, we make some more definitions and fix some properties that will be maintained during the process.

In any step $j$, we say that a point $p$ is {\em good} if any wedge containing $p$ already contains two points of $P_j$ connected by an edge of $G_j$. I.e. at any time after $j$ a wedge containing $p$ will contain points of both colors in the final coloring.

At any time $j$, consider the order of the points of $S_j$ given by their $x$ coordinates. If two points of $S_j$ are consecutive in this order then we say that these staircase points are {\em neighbors}\footnote{This does not mean that they are connected in the graph.}. 
A point $s$ of the staircase is {\em almost good} if any wedge containing $s$ and its neighbor(s) contains two points of $P_j$ connected by an edge of $G_j$. Notice that the good points and the neighbors of the good points are always almost good.

We say that a point $p$ of $P_j$ is {\em above} the staircase if there exists a staircase point $s\in S_j$ such that $p$ is NE from $s$.
If $p$ is not above or on the staircase, then we say that $p$ is {\em below} the staircase.
%Similarly, a point $v$ is {\em below} the staircase if there exists a staircase point $s\in S_t$ such that $v$ is SW from $s$.
Now we can state the properties we maintain:

At any time $j$:
\begin{enumerate}
\item All points above the staircase are good.%, i.e. any wedge containing them contains an edge of $G_t$.
\item All points of the staircase are almost good.% (some of them even good).
\item All points below the staircase are incomparable.
\item If a wedge only contains points that are below the staircase then it contains at most $3$ points.
\end{enumerate}

For $t=0$ all these properties are trivially true. Suppose that the properties hold at time $t-1$. Now we proceed with point $p_t$ according to the following algorithm maintaining all the properties. During the process, we denote the actual graph by $G$ and the actual staircase by $S$.\\

{\bf \textsc Algorithm Step $t$}
 
Set $G=G_{t-1}$ and $S=S_{t-1}$.

\begin{figure}[t]
    \centering
    %\subfigure[]{\label{}
        \includegraphics[scale=1]{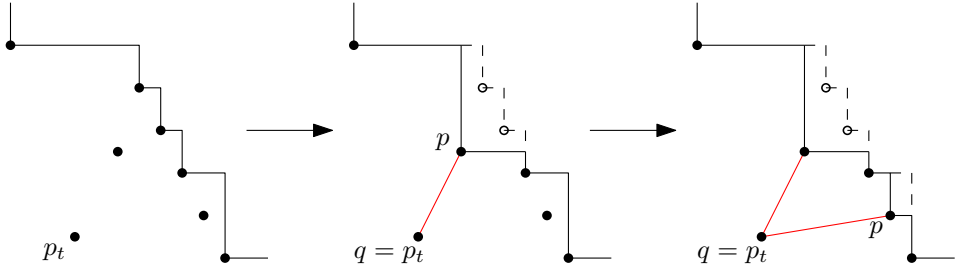}
        %}
        %\caption{before}       
   \caption{Repeated application of Step (b) of the algorithm}
   \label{case2}
\end{figure}

\begin{itemize}
\item[Step (a)] If $p_t$ is above the staircase $S_{t-1}$ then we do the following, otherwise skip to Step (b).

In this case $S_t=S_{t-1}$ and $G_t=G_{t-1}\cup \{e\}$, where $e$ is an arbitrary edge between $p_t$ and a point $s$ of $S_{t-1}$ which is SW from $p_t$. The properties will hold trivially by induction, the only thing we need to check is if $p_t$ is a good point, but this is again guaranteed as any wedge containing $p_t$ contains the edge $e$. The algorithm terminates. %(we did not need the auxiliary structures in this case).

Note that we proceed further if and only if $p_t$ is below the staircase $S_{t-1}$.

\item[Step (b)] If there exist two points $p$ and $q$ that are below the staircase and $p$ and $q$ are comparable %(eg. $p$ is NE from $q$) and there exists a wedge $V$ such that $V$ contains $p$ and $q$ but no points of the staircase,
then we do the following, otherwise skip to Step (c).

Without loss of generality suppose that $q$ is SW from $p$.
Notice that because of Property 3, either $p$ or $q$ is the last added point and there are no points below the staircase that are NE from $p$. Now, define the new staircase, $S$, as $S$ minus the points of $S$ that are NE from $p$, plus the point $p$. This way the points of the staircase remain pairwise incomparable as we added $p$ and deleted all the points that were comparable to $p$. Also, we add the edge $pq$ to the graph, i.e. $G:=G\cup \{pq\}$. For an illustration of repeated application of this step, see Figure \ref{case2} (edges of G are drawn red). Thus, any wedge containing $p$ contains the edge $pq$, i.e. $p$ is a good point. Property 1 is true for the points that were above the old $S$ by induction. All other points above $S$ are exactly the points that were deleted from the staircase in this step. All such points are NE from $p$ and thus any wedge containing them contains the edge $pq$. Property 2 holds for $p$ as it is a good point, it holds for the 2 neighbors of $p$ as any point neighboring a good point is an almost good point. For any other $s$ from the staircase its neighbors remain the same, so it remains almost good.

Go back to Step (b) until Property 3 is satisfied, then proceed to Step (c).

\item[Step (c)] If there exist $4$ points below the staircase such that these $4$ points are pairwise incomparable and there exists a wedge $V$ such that $V$ contains these $4$ points but no points of the staircase then do the following, otherwise skip to Step (d).

Denote these $4$ points by $q_1,q_2,q_3,q_4$ in increasing order of their $x$ coordinates. Notice that there are no points below the staircase that are comparable because of Step (b). Now define the new $S$ as the old $S$ minus the points of $S$ that are NE from $q_2$ or $q_3$, plus the points $q_2$ and $q_3$. This way the points of the staircase remain pairwise incomparable as we added $q_2$ and $q_3$ and deleted all the points that were comparable to them. Also, we add the edges $q_1q_2$ and $q_3q_4$ to the graph, i.e. $G_t=G_{t-1}\cup \{q_1q_2,q_3q_4\}$. For an illustration see Figure \ref{case3}. Property 1 is true for the points that were above the old $S$ by induction. All other points above the new $S$ are exactly the points that were deleted from the staircase in this step. It is easy to check that such a point is either NE from both of $q_1$ and $q_2$ or it is NE from both of $q_3$ and $q_4$ (we use that $V$ was completely below the staircase, see Figure \ref{case3b}). Thus, a wedge containing such a point contains the edge $q_1q_2$ or the edge $q_3q_4$, Property 1 will be true. Property 2 is true for $q_2$ and also for its neighbor which is not $q_3$ as a wedge covering them must cover $q_1$ as well and thus the edge $q_1q_2$, i.e. they are almost good. By symmetry $q_3$ and its neighbor which is not $q_2$ are also almost good. For any other $s$ from the rest of the staircase, $s$ remains almost good by induction as its neighbors do not change.

Go back to Step (c) until Property 4 is satisfied, then proceed to Step (d).

\begin{figure}[t]
    \centering
    \subfigure[]{\label{case3}
        \includegraphics[scale=1]{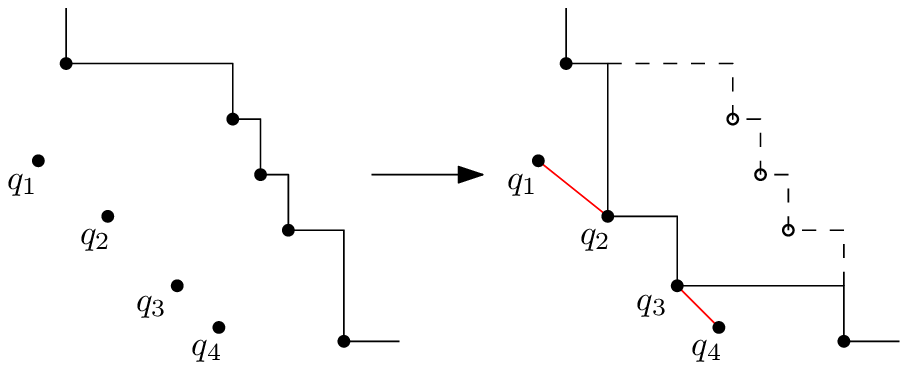}
        }
        %\caption{Application of Step (c) of the algorithm}       
    \hskip 10mm
    \subfigure[]{\label{case3b}
        \includegraphics[scale=1]{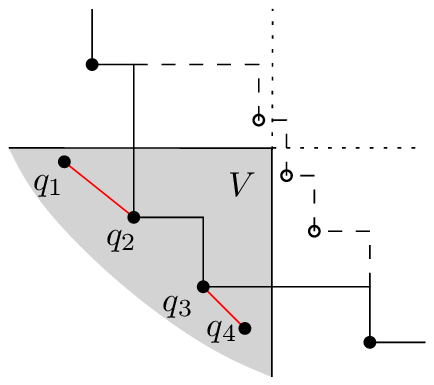}
        }
        %\caption{Old staircase vertices became good}       
   \caption{Application of Step (c) of the algorithm}       
   %\label{case2}
\end{figure}

\item[Step (d)] Set $S_t=S$ and $G_t=G$ and the algorithm terminates. 

\end{itemize}

Adding $p_t$ below the staircase and proceeding as in Case (b) or Case (c) always maintains Properties 1 and 2. As neither Case (b) nor Case (c) can be applied anymore, Properties 3 and 4 must hold as well. %Indeed, otherwise there would be a wedge containing at least $4$ points below the staircase and none from the staircase. From these 4 either two would be comparable (Case (b)) or 4 would be incomparable (Case (c)), a contradiction.
Now let us examine the graph $G$.

\begin{claim}
The final graph $G$ is a forest.
\end{claim}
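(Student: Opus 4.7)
For each vertex $v$, let $f(v)$ denote the first step at which $v$ stops being strictly below the staircase, i.e., $f(v)=t$ when $v=p_t$ is added above the staircase in Step~(a), or when $v$ is promoted onto the staircase during Step~(b) or~(c); set $f(v)=\infty$ if $v$ stays below at the end. A point's status evolves one-way: any staircase point expelled in a later step must lie NE of the new staircase entrant, so it becomes above, and above-points remain above forever. Thus $f(v)$ records a single promotion event for $v$.

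The idea is to orient every added edge from its promoted endpoint to the other endpoint, giving $p_t\to s$ in Step~(a), $p\to q$ in Step~(b), and $q_2\to q_1$ together with $q_3\to q_4$ in Step~(c). Since the vertex promoted at step $t$ is precisely one with $f(v)=t$, and each vertex is promoted at most once, this orientation has out-degree at most one at every vertex. Combined with absence of directed cycles, this forces the underlying graph of $G$ to be a forest: any undirected cycle would make each of its vertices have exactly one cycle-neighbor as out-neighbor, yielding a directed cycle.

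The main obstacle is ruling out directed cycles, for which I would compare the edge type with how $f$ changes across it. Writing $t_i=f(v_i)$, a Step~(a) edge $v_i\to v_{i+1}$ forces $v_{i+1}$ to be on the staircase at step $t_i-1$, hence $f(v_{i+1})<t_i$; a Step~(b)/(c) edge keeps $v_{i+1}$ below the staircase after step $t_i$, hence $f(v_{i+1})>t_i$ (or $=\infty$). Assume a directed cycle exists; every $t_i$ is then finite. Pick an index $j$ with $t_j$ maximal and call this value $t^*$. The edge leaving $v_j$ cannot be of Step~(b)/(c) type (that would demand $t_{j+1}>t^*$), so it is a Step~(a) edge and $v_j=p_{t^*}$ was added directly above at step $t^*$. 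But then $v_j$ is above the staircase from step $t^*$ onward and does not exist before, so $v_j$ can never be on the staircase or below it, making it impossible to serve as the old endpoint of the edge entering it---a contradiction. Hence $G$ is a forest.
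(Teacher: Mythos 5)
Your approach differs genuinely from the paper's. The paper proves by induction the stronger invariant that $G$ is a forest in which no two points below the staircase lie in the same connected component: a Step~(a) edge joins a brand-new vertex, and a Step~(b)/(c) edge joins two below-staircase vertices, which by the invariant lie in distinct components (one is then promoted, so the invariant persists). You instead orient each edge from its promoted endpoint, note that every vertex is promoted at most once (giving out-degree at most one, since the paper's argument shows staircase membership is monotone), and exclude undirected cycles by showing they would force a directed cycle, which you then rule out by comparing promotion times. This is a nice alternative route, but the time-ordering step has a gap.

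The gap: you assert that a Step~(b)/(c) edge $v_i\to v_{i+1}$ forces $f(v_{i+1})>t_i$ strictly. This does not follow from what you prove, because a single step $t$ can comprise \emph{many} iterations of Step~(b) and then of Step~(c). The target $v_{i+1}$ is indeed below the staircase immediately after the iteration that promotes $v_i$, but it may be promoted by a \emph{later iteration within the same step} $t_i$, yielding $f(v_{i+1})=t_i$. For instance, if $p_t$ lies SW of a below-staircase point $b$, one Step~(b) iteration promotes $b$ and adds the edge $b\to p_t$; $p_t$ then remains below and may serve as $q_2$ or $q_3$ in a subsequent Step~(c) iteration of the same step $t$, so $f(p_t)=t=f(b)$. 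With $t_{j+1}=t^*$ permitted, your maximality argument no longer forces the out-edge of $v_j$ to be of Step~(a) type, and the contradiction is not reached (in particular, a cycle consisting entirely of Step~(b)/(c) edges with all $t_i=t^*$ is not excluded).

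The fix is routine: refine the clock from the step index $t$ to the iteration (sub-step) index $\sigma$. In a single iteration of Step~(b) the only promoted vertex is the source $p$, and in a single iteration of Step~(c) the only promoted vertices are the sources $q_2,q_3$; hence the target of a Step~(b)/(c) edge, if ever promoted, is promoted at a strictly later iteration, giving $\sigma(v_{i+1})>\sigma(v_i)$ with no exceptions. Step~(a) edges still give $\sigma(v_{i+1})<\sigma(v_i)$, since the staircase endpoint was promoted before step $t_i$ began. Replacing $f$ by $\sigma$ in your maximality argument then closes the cycle case.
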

\begin{proof}
We prove by induction a stronger statement, that $G$ will be such a forest that the components of the points below the staircase are disjoint trees.

When we add an edge in Step (a), then the newly added point that goes above the staircase will be one of the endpoints, thus this property is maintained.

When we add an edge in Step (b) or (c), then it connects two points below the staircase one of which we immediately move to the staircase, so we are done by induction.
%ezt kikommenteltem, mert nem ertettem, miert ne lehetne mar v es a leendo szomszedja kozott egy ut?
%We need to prove that $G$ does not contain a cycle. Assume on the contrary, and take an arbitrary cycle $C$. Now take the edge $e$ of $C$ that appears first (at the earliest time during the process) in the graph. Notice, that in any step if an edge appears then it is drawn between a point of the staircase and a point below the staircase. Thus, if $e$ appears in time $t$ then it must be an edge between a point $v$ of $S_t$ and a point $w$ below $S_t$. Then, for the same reason there won't draw any more edges connected to $v$, specifically we will never draw the other edge of the cycle incident to $v$, a contradiction.
\end{proof}

\begin{figure}[t]
    \centering
    %\subfigure[]{\label{}
        \includegraphics[scale=.8]{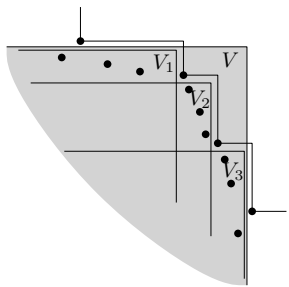}
        %}
        %\caption{before}       
   \caption{At most 11 points can be in a monochromatic wedge}
   \label{12point}
\end{figure}

\begin{claim}
Any two-coloring of $G$ is a good coloring of $P$.
\end{claim}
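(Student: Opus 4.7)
The plan is to prove that every wedge $V$ containing at least $12$ points of $P_t$ must contain some edge of $G_t$. Since $G$ is a forest it admits a proper two-coloring, and the endpoints of that edge then receive different colors and sit inside $V$, which is exactly the good-coloring property.

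First I would restrict the point types inside $V$. Property~1 says any point above the current staircase is good, so if $V$ meets any above-staircase point it already contains an edge of $G_t$; hence I may assume $V$ meets only staircase and below-staircase points. Ordered by $x$-coordinate, the staircase points inside $V$ form a contiguous block $s_j,s_{j+1},\dots,s_k$, because $V$ is a SW-wedge and the staircase is strictly monotone. I would show $k-j\le 1$: if $k-j\ge 2$, the middle point $s_{j+1}$ is interior and both of its neighbours $s_j,s_{j+2}$ are in $V$, so Property~2 forces an edge of $G_t$ inside $V$. The same observation rules out the possibility that $V$ contains two consecutive staircase points one of which is a staircase endpoint. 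So under the no-edge assumption $V$ meets $0$, $1$, or $2$ staircase points, and in the two-point case both are interior.

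The main work is bounding the below-staircase points of $V$ using Property~4. If $V$ has apex $(a,b)$ and meets a single staircase point $s_j=(x_j,y_j)$, I would use the sub-wedges $V_L$ with apex $(x_j-\varepsilon,b)$ and $V_R$ with apex $(a,y_j-\varepsilon)$. Both are contained in $V$, neither meets any staircase point (the candidate neighbours $s_{j-1}$ and $s_{j+1}$ fail one of the two defining inequalities precisely because they lie outside $V$), and a short case split on the $x$-coordinate shows that every below-staircase point of $V$ lies in $V_L\cup V_R$: a point with $x>x_j$ cannot be NE of $s_j$, so its $y$ is $<y_j$ and it falls in $V_R$. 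Property~4 then yields $|V\cap P_t|\le 2\cdot 3+1=7$. The two-staircase-point case is analogous with three sub-wedges $V_L,V_M,V_R$ around the apices $(x_j-\varepsilon,b)$, $(x_{j+1}-\varepsilon,y_j-\varepsilon)$, $(a,y_{j+1}-\varepsilon)$, and yields $|V\cap P_t|\le 3\cdot 3+2=11$; the zero-staircase-point case is Property~4 directly, with $\le 3$ points.

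The part that will need the most care is the two-staircase-point case. I will need to verify that $V_M$ contains no staircase point at all (no $s_i$ can satisfy both $x_i<x_{j+1}$ and $y_i<y_j$, since that would force $i\le j$ and $i>j$ at once), that $V_R$ avoids the further neighbour $s_{j+2}$ (which lies outside $V$ and must therefore violate the $x$-inequality since its $y$-coordinate is automatically below $b$), and that $V_L\cup V_M\cup V_R$ captures every below-staircase point of $V$ (combining the below-staircase inequalities against $s_j$ and $s_{j+1}$ with general position, each such point falls into one of the three sub-wedges). Once these are verified, the maximum count in an edge-free wedge is $11$, matching Figure~\ref{12point} and contradicting $|V\cap P_t|\ge 12$, which proves the claim.
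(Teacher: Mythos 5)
Your proof is correct and follows essentially the same route as the paper: dispose of wedges touching above-staircase points via Property~1, wedges touching $\ge 3$ staircase points via Property~2, and then cover the remaining below-staircase points of $V$ by (up to) three staircase-free sub-wedges and invoke Property~4 to get the bound $3\cdot 3 + 2 = 11 < 12$. The only difference is that you spell out the sub-wedge apices and the verification that they avoid the staircase explicitly, where the paper delegates this to Figure~\ref{12point}; your extra remark about two consecutive staircase points with one an endpoint is harmless but unnecessary, since the $\le 11$ count already handles that case.
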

\begin{proof}
Take an arbitrary two-coloring of $G$. Take an arbitrary wedge $V$ at time $t$ that contains at least $12$ points of $P_t$. If $V$ contains a point from above the staircase $S_t$ then by Property 1 $V$ contains points of both colors. If $V$ contains at least $3$ points from the staircase then $V$ also contains $3$ consecutive points, thus by applying Property 2 to the middle one $V$ contains both colors (as it contains both neighbors of this middle point). Finally, if a wedge $V$ does not contain a point from above the staircase and contains at most $2$ points from the staircase then all the points below the staircase that are covered by $V$ can be covered by $3$ wedges containing points only from below the staircase (see the three wedges $V_1, V_2$ and $V_3$ in Figure \ref{12point}). By Property 4 each of these wedges cover at most $3$ points, thus $V$ can contain altogether at most 11 points ($2$ from the staircase and $3\cdot 3$ from below the staircase), a contradiction.
\end{proof}

The above claim finishes the proof of the theorem.

\section{Miscellany and a lower bound}\label{seclower}
We have seen in the Introduction that if the point set of Theorem \ref{dualthm} is from the $x+y+z=0$ plane, then the problem is equivalent to the cover-decomposability of homothetic copies of an equilateral triangle. Another special case is if the point set is on the $x+y=0$ plane. %, a plane parallel to the $z$ axis.
The intersection family of the octants with this plane is the family of {\em bottomless axis-parallel rectangles}\footnote{A set is a {\em bottomless axis-parallel rectangle} if it is the homothetic copy of the set $\{(x,y):0<x<1,y<0\}$ %, they can be regarded as degenerate triangles
.}. 
% ez ertelmetlen, mert nem lehet z iranyba vetiteni
%Note that in this case the projected point set from Claim \ref{project} is a diagonal line, i.e. the points of it are pairwise incomparable.
Bottomless rectangles were examined by the first author \cite{wcf} where it was proved that any $3$-fold covering with bottomless rectangles is decomposable into two coverings and also that any finite point set can be colored with two colors such that every bottomless rectangle containing at least $4$ points contains both colors. It was also shown  that these results are sharp.
We will use the ideas from \cite{wcf} to prove the following claims, first of which is a strengthening of Theorem \ref{dualthm} in a special case and the second giving a sharp lower bound for this special case, which also holds for the general case. % (we use the same notations as in the proof of Theorem \ref{mainthm}).
%Note that in the following two claims one can think about the original point set and then wedges refer to octants or think about the projected point set and then wedges refer to quadrants. In the proofs we use the latter.

\begin{claim}\label{atlonclaim}
If the projection of the original point set from $R^3$ onto the $z=0$ plane yields a point set $P$ having only pairwise incomparable points, then it admits a two-coloring such that any translate of a given octant that contains at least $4$ points contains points with both colors.
\end{claim}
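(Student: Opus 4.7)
My plan is to reduce the claim directly to the bottomless-rectangle result of the first author \cite{wcf}. The first step is to pass to the planar reformulation: by the natural analogue of Claim \ref{project} with $4$ in place of $12$ (the same proof works verbatim), it suffices to produce a $2$-coloring of the planar projection $P$ such that for every $t$ and every planar wedge $V$ with $|V \cap P_t|\ge 4$, both colors occur in $V\cap P_t$, where $P_t$ is the projection of the first $t$ points of the original $3$-dimensional set in order of increasing $z$-coordinate.

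Next I would exploit the antichain hypothesis to describe wedge-intersections combinatorially. Sorting $P$ as $p_1,\ldots,p_n$ by increasing $x$-coordinate also orders them by decreasing $y$-coordinate, so a wedge with apex $(a,b)$ picks out those $p_k$ with $p_k.x<a$ (a prefix in the sorted order) and $p_k.y<b$ (a suffix in the sorted order); hence the intersection $V\cap P$ is always a consecutive interval $[l,r]$, and as $a,b$ vary independently every such interval is realized by some wedge.

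I would then encode the remaining data in an auxiliary planar point set $Q=\{q_i=(i,\tau_i):1\le i\le n\}$, where $\tau_i\in\{1,\ldots,n\}$ is the rank of the $3$-dimensional preimage of $p_i$ in the $z$-order. Under the bijection $p_i\leftrightarrow q_i$, any wedge-intersection $V\cap P_t$ corresponds exactly to
\[
\{q_i:\; l\le i\le r,\ \tau_i\le t\}\;=\;Q\cap\bigl((l-\tfrac12,\,r+\tfrac12)\times(-\infty,\,t+\tfrac12)\bigr),
\]
a bottomless axis-parallel rectangle on $Q$; conversely every bottomless rectangle on $Q$ is of this form, since the cutoffs $l$, $r$, and $t$ can be chosen independently.

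Finally I would invoke the cited result from \cite{wcf}: $Q$ admits a $2$-coloring in which every bottomless rectangle containing at least $4$ points sees both colors. Transporting this coloring back along $q_i\mapsto p_i$ gives the required coloring of $P$, and hence the desired coloring of the original $3$-dimensional point set. The argument is essentially a translation exercise; I expect no serious obstacle once the encoding $Q$ is in hand, the main thing to verify being the two directions of the range-family equivalence, both of which follow immediately from the independent choice of the cutoffs.
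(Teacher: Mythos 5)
Your reduction is correct, and it is a genuinely different route from the paper's. The paper gives a direct, self-contained construction: it sorts $P$ by $x$-coordinate (hence reverse $y$-coordinate, by the antichain hypothesis), and maintains a partial two-coloring as points arrive in $z$-order, with the invariants that (i) no two consecutive points in the $x$-order are both uncolored, and (ii) the colored points alternate; a new point arriving next to an uncolored neighbor triggers coloring that pair so as to preserve alternation, and one checks that any wedge containing $\ge 4$ points of $P_t$ already sees two consecutive colored points, hence both colors. Your approach instead makes explicit the observation the paper only mentions in passing before the claim (that antichains under the wedge-plus-time range family are combinatorially the same as bottomless rectangles), constructs the auxiliary set $Q=\{(i,\tau_i)\}$, verifies the range-family bijection in both directions, and then invokes the coloring theorem from \cite{wcf} as a black box. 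What the paper's route buys is self-containment (the authors say they ``use the ideas from \cite{wcf}'' rather than the theorem itself, keeping the paper readable without the reference); what your route buys is a cleaner conceptual picture of why the bound is $4$ -- it is literally the bottomless-rectangle constant -- at the cost of leaning on the external result. Both the range-family equivalence and the analogue of Claim~\ref{project} with $4$ in place of $12$ go through exactly as you say, so the argument is sound.
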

\begin{proof}
We use the same notations as in Section $2$. %We start with the empty set and add the points one by one, .
Now at any time the points of $P_t$ are pairwise incomparable. Order them according to their $x$ coordinate. We will maintain a partial coloring such that at any time $t$:
\begin{itemize}
\item[1.] There are no two consecutive points in this order that are not colored.
\item[2.] The colored points are colored alternatingly.
\end{itemize}
We start with the empty set and then in a general step we add the point $p_t$. If in the order it goes between two colored points then we leave it uncolored. If it comes next to an uncolored point then we color these two points maintaining the alternating coloring. At the end we color the remaining uncolored points arbitrarily. We claim that at any time $t$ any wedge covering at least $4$ points covers points from both colors already at step $t$ of the coloring. Indeed, any wedge covers consecutive points and it covers at least $2$ (consecutive) colored points by Property 1 and any two consecutive colored points are colored differently by Property 2.
\end{proof}
\begin{figure}[t]
    \centering
    \subfigure[]{\label{constrdiag}
        \includegraphics[scale=0.7]{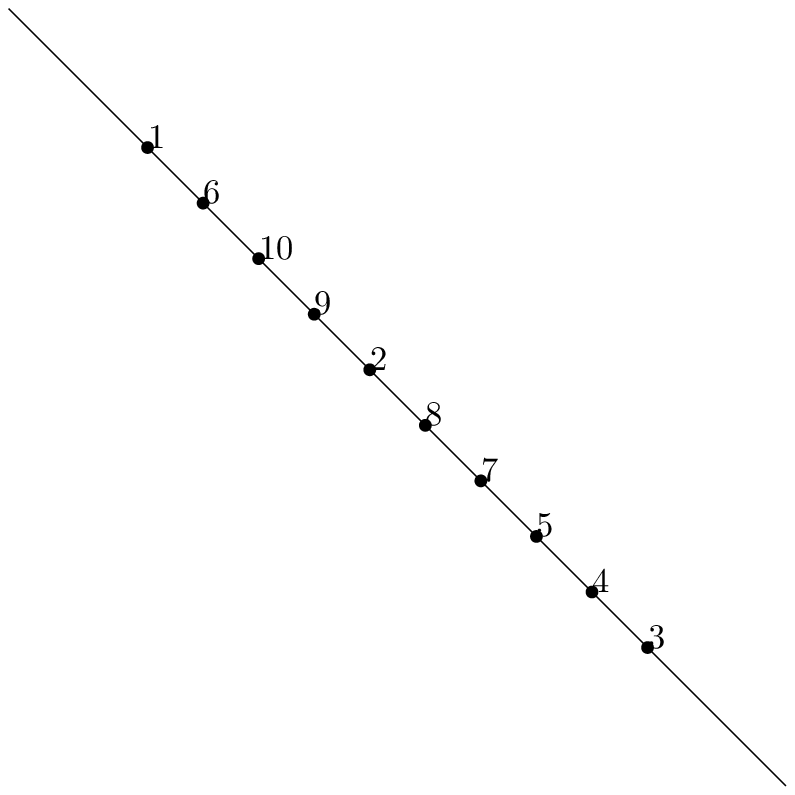}}
    \subfigure[]{\label{constr}
        \includegraphics[scale=0.7]{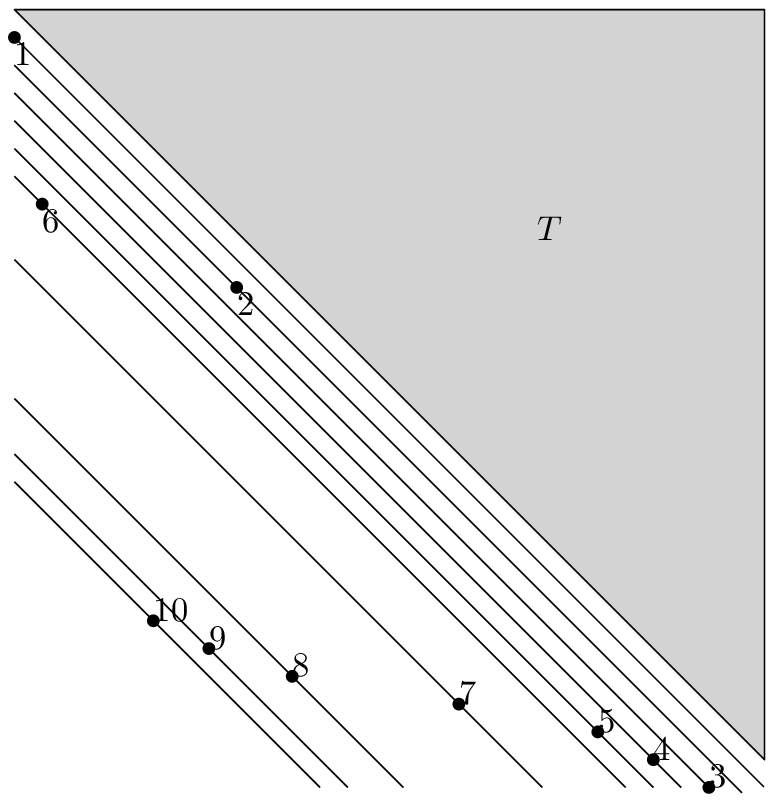}
}
   \caption{Lower bound constructions}
   \label{}
\end{figure}

\begin{claim}
For any octant there exists a $10$ point set $P\subset R^3$ such that its projection onto the $z=0$ plane yields a point set having only pairwise incomparable points, yet in any two-coloring of $P$ there exists a translate of a given octant that contains $3$ points with the same color and no other points.
\end{claim}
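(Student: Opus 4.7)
My plan is to exhibit an explicit 10-point configuration in $\mathbb{R}^3$ and verify the stated property; the construction is essentially a three-dimensional lift of the tight bottomless-rectangle construction from \cite{wcf}. Place the 10 points so that their projections onto the $z=0$ plane lie on a strictly decreasing ``diagonal'' (hence an antichain), ordered $p_1,\ldots,p_{10}$ by increasing $x$-coordinate, and then assign $z$-coordinates $z_1,\ldots,z_{10}$ to realize the layered pattern suggested by Figure~\ref{constrdiag} (several points at a common ``top'' $z$-level, with the remaining points at strictly lower levels in a carefully chosen interleaving along the $x$-axis).

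The first observation is that, because the projections form an antichain, the intersection of any translate of the octant $W$ with our point set is always of the form $\{p_i : j_1 \le i \le j_2 \text{ and } z_i < c\}$ for indices $j_1, j_2$ and a depth $c$ determined by the apex of the translate. Thus the set of possible octant traces is exactly the set of ``interval $\cap$ depth-cutoff'' selections, and to isolate a given triple it suffices to pick $j_1,j_2$ so that the triple lies inside $[j_1,j_2]$ and $c$ so that only the three desired points in that range have $z$-coordinate below $c$.

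The core of the argument is then to identify a small collection of critical triples $T_1,T_2,\ldots\subset\{1,\ldots,10\}$, each realizable as an exact octant trace, such that the 3-uniform hypergraph with hyperedges $T_k$ is not properly 2-colorable. For any 2-coloring of $P$, some $T_k$ is then monochromatic, and the octant translate that realizes $T_k$ as its trace witnesses the claim. The $z$-coordinates will be tuned so that each critical triple is genuinely realizable, i.e.\ no extraneous point of the construction accidentally falls inside the intended octant; this is where the exact coordinates in Figure~\ref{constr} matter. For the extension to a ``general case'' point set (without the antichain hypothesis on the projection) the same 10 points serve, since adding the antichain restriction only \emph{weakens} the adversary.

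The main obstacle is the combinatorial design rather than any deep idea: one must choose the interleaving of $x$-coordinates and $z$-levels so that the family of realizable triples is rich enough to force a monochromatic hyperedge under every 2-coloring, while keeping the point count at $10$ and while simultaneously ruling out contamination of each critical trace by a stray point. Once the coordinates are fixed, the rest is a short case analysis driven by pigeonhole (at least five points share a color) and the short list of essentially different coloring patterns that survive the forbidden-triple constraints.
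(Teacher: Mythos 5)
Your proposal correctly identifies the paper's overall strategy: project onto an antichain, tune the $z$-levels so that a family of triples is realizable as exact octant traces (the ``interval $\cap$ depth-cutoff'' observation is right and is the same structural fact the paper uses implicitly), and then argue that the resulting $3$-uniform hypergraph is not properly $2$-colorable. However, the proposal never actually produces the construction. The statement to be proved is an \emph{existence} claim about a concrete $10$-point set, and essentially all of the content lies in exhibiting it: the specific interleaving of $x$-order and $z$-order, the explicit list of critical triples, the verification that each triple is a clean trace (no stray point falls inside the realizing octant), and the short case analysis. Your write-up explicitly defers every one of these steps (``one must choose the interleaving\ldots'', ``once the coordinates are fixed, the rest is\ldots''), so what you have is a plan, not a proof.

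This is a genuine gap and not a routine omission. By the companion claim in the same section, \emph{every} antichain-projection point set admits a $2$-coloring in which all octant traces of size $\ge 4$ are bichromatic; hence $3$ is extremal and the construction must be tight, so whether a non-$2$-colorable hypergraph of realizable triples even exists on $10$ points is precisely the thing that needs to be demonstrated, not assumed. To close the gap you would need to supply the paper's explicit configuration (Figure~\ref{constrdiag}), check that the twelve triples $(1,2,3)$, $(1,2,4)$, $(1,2,5)$, $(3,4,5)$, $(6,2,5)$, $(6,2,7)$, $(6,2,8)$, $(5,7,8)$, $(6,1,2)$, $(6,1,9)$, $(6,1,10)$, $(2,9,10)$ are each realizable as exact octant traces, and then run the pigeonhole argument on the pair chosen from $\{1,2,6\}$ with the three resulting cases. (Also a small notational mismatch: the paper labels points by $z$-order, not $x$-order, which is what makes the interleaving visible in the triple list.)
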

\begin{proof}
The point set on Figure \ref{constrdiag} has the needed properties (for simplicity, the projection of the point having the $t^{th}$ biggest $z$ coordinate is denoted by $t$ instead of $p_t$). Indeed, suppose on the contrary that there exists a two-coloring with no monochromatic wedge covering exactly $3$ points. It is easy to check that the triples $(1,2,3),$ $(1,2,4),$ $(1,2,5),$ $(3,4,5),$ $(6,2,5),$ $(6,2,7),$ $(6,2,8),$ $(5,7,8),$ $(6,1,2),$ $(6,1,9),$ $(6,1,10),$ $(2,9,10)$ can all be covered by some wedge at some time $t$. By the pigeonhole principle there are two points from $(1,2,6)$ that have the same colors. If e.g. $1$ and $2$ are colored red, then by the first three triples $3, 4$ and $5$ all must be colored blue, but then the fourth triple is monochromatic, a contradiction. The analysis is similar if $2$ and $6$ have the same color. Finally, if $1$ and $6$ is e.g. red and $2$ is blue, then we obtain a contradiction from the last three triples, as $9$ and $10$ should be both blue because of the penultimate and antepenultimate triples, but then the ultimate triple is monochromatic.
\end{proof}

This construction can be modified a bit to imply the same result for translates of a given triangle.

\begin{claim}
There exists a 10 point set $P\subset R^2$ and a given triangle $T$ such that in any two-coloring of $P$ there exists a translate of $T$ that contains $3$ points of the same color and no other points.
\end{claim}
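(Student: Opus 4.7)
My plan is to reuse the 10-point construction of the previous claim, realizing it inside a planar slice of $\R^3$ so that translates of the octant $W$ restrict to translates of a planar triangle. Since any two triangles differ by an affine transformation, and affine transformations send translates of one triangle to translates of the other, it suffices to establish the claim for a single convenient triangle $T$. I take $T = W \cap \Pi$, where $\Pi$ is the plane $\{x+y+z = c\}$ for a suitable constant $c<0$; here $T$ is an equilateral triangle (inside $\Pi$).

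The key geometric fact is that for any translate $W+v$ of the octant, the intersection $(W+v)\cap \Pi$ is a translate of $T$ inside $\Pi$, and every translate of $T$ inside $\Pi$ arises in this way as $v$ varies over $\R^3$. Consequently, if the 10 points of the previous construction are placed so as to lie on $\Pi$, then each of the 12 octant-translates from the proof of the previous claim restricts to a translate of $T$ in $\Pi$ covering exactly the same triple of points and no others.

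The remaining ingredient is to realize the 10-point configuration on $\Pi$ while preserving the two combinatorial properties used before: pairwise incomparability of the projections to the $z=0$ plane, and the prescribed $z$-ordering that makes each of the 12 specific triples coverable. On $\Pi$ the $z$-coordinate equals $c-x-y$, so prescribing the $z$-order is the same as prescribing the reverse of the $x+y$ order on the 2D positions. This still leaves two free parameters per point, which is enough flexibility to redraw Figure \ref{constrdiag} inside $\Pi$: place the 10 points near an anti-diagonal (to guarantee incomparability), then perturb positions so that the $x+y$ values realize the required time ordering and so that each of the 12 translates of $T$ contains exactly the prescribed triple of points.

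Once the 10 points lie on $\Pi$ with these properties, the pigeonhole argument on the triple $(1,2,6)$ and the case analysis across the 12 listed triples transfer verbatim from the proof of the previous claim, yielding that no two-coloring of $P$ avoids a monochromatic translate of $T$ containing exactly three points. The main obstacle is the concrete realizability step, i.e.\ redrawing the 10-point example inside $\Pi$ so that both the incomparability condition and the 12 exact-covering conditions hold simultaneously; this is a finite geometric check that one expects to succeed generically, but it is the only place where anything beyond the previous proof needs to be done.
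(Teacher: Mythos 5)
There is a genuine gap, and it lies in your ``key geometric fact.'' For the octant $W=\{x\le 0,\,y\le 0,\,z\le 0\}$ and the plane $\Pi=\{x+y+z=c\}$ with $c<0$, the intersection $(W+v)\cap\Pi$ is \emph{not} in general a translate of $T=W\cap\Pi$: it is a homothetic copy of $T$ whose side length is proportional to $v_1+v_2+v_3-c$. One gets an actual translate of $T$ only when the apex $v$ of the octant satisfies $v_1+v_2+v_3=0$. So when you slice the 12 covering octants from the previous claim with $\Pi$, you generally obtain 12 equilateral triangles of 12 different sizes, i.e.\ homothetic copies, not translates. That proves only the (weaker, already known) statement about homothetic copies and does not establish the claim, which is specifically about translates of a fixed triangle $T$.

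To repair this along your lines you would have to impose the extra condition that every one of the 12 octants witnessing a triple can be chosen with apex on a fixed plane $x+y+z=\mathrm{const}$, while simultaneously keeping the points on $\Pi$, the incomparability of projections, and the exact-covering property for each triple. That is a codimension-one constraint per apex, so it is not ``generic,'' and it is precisely the nontrivial part of the realizability check; your proposal neither notices it nor argues that it can be satisfied. The paper avoids this entirely by exhibiting a concrete planar 10-point configuration together with a concrete triangle (Figure~\ref{constr}) and observing that the same 12 triples are exactly coverable by translates of that triangle, after which the pigeonhole argument is identical; that direct route is both simpler and actually sound.
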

\begin{proof}
The point set and the triangle on Figure \ref{constr} has the needed properties, the proof of this is exactly the same as of the previous claim.
\end{proof}

Finally we note that this construction is a bit smaller then the one in \cite{wcf}, which had size $12$, so we obtain a smaller construction for that problem too by taking the same points as in Claim \ref{atlonclaim} projected onto the $y=0$ plane.

\section{Locally finite coverings of the whole plane} \label{locsct}
In this section we prove Theorem \ref{locthm}.

We say that a covering is {\em locally finite} if every compact set intersects only a finite number of covering sets, i.e. homothetic copies of the given triangle, in our case.
In this section we prove that any locally finite, $12$-fold covering of the whole plane with homothetic copies of a triangle is decomposable into two coverings. After an affine transformation we can suppose that the triangle is equilateral, we will denote it by $T$.
We will use

\begin{lem}\label{kil}[K\"onig's Infinity Lemma, \cite{KIL}] Let $V_0,V_1,..$ be an infinite sequence of disjoint non-empty finite sets, and let $G$ be a graph on their union. Assume that every vertex $v_n$ in a set $V_n$ with $n\ge1$ has a neighbour $f(v_n)$ in $V_{n-1}$. Then $G$ contains an infinite path, $v_0v_1...$ with $v_n \in V_n$ for all $n$.
\end{lem}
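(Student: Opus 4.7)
The plan is to exploit the function $f$ to realise $\bigcup_{n \geq 0} V_n$ as a rooted forest whose roots lie in $V_0$: declare $f(v)$ to be the parent of $v$ for each $v \in V_n$ with $n \geq 1$. Every edge used in this forest is by hypothesis a genuine edge of $G$, so constructing an infinite downward path $v_0 \in V_0, v_1 \in V_1, v_2 \in V_2, \ldots$ with $v_n = f(v_{n+1})$ for all $n$ automatically gives an infinite path in $G$ of the desired form. This reduces the lemma to a purely combinatorial statement about the forest.

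Call a vertex $v \in V_n$ \emph{fat} if it has infinitely many descendants in this forest, i.e., if infinitely many vertices of $\bigcup_{k \geq n} V_k$ reach $v$ by iterating $f$. The first substantive step is to locate a fat root. Every vertex of $\bigcup_n V_n$ has a unique ancestor in $V_0$, obtained by iterating $f$ until level $0$, so the $|V_0|$ trees rooted at the vertices of $V_0$ partition the whole vertex set. Since each $V_n$ is non-empty the union is infinite, while $V_0$ is finite, so pigeonhole guarantees that at least one of these trees is infinite, yielding a fat root $v_0 \in V_0$.

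The key inductive step is the following: if $v_n \in V_n$ is fat, then at least one of its children in the forest is also fat. Indeed, the children of $v_n$ are the vertices $u \in V_{n+1}$ with $f(u) = v_n$, a finite set since $V_{n+1}$ is finite; every descendant of $v_n$ other than $v_n$ itself lies in the subtree rooted at exactly one child, so infinitely many descendants split among finitely many subtrees force some child subtree to be infinite, giving a fat $v_{n+1}$. Iterating this step starting from $v_0$ yields the required infinite path $v_0 v_1 v_2 \ldots$.

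There is no real obstacle: the argument is a textbook double pigeonhole, and the main conceptual point is simply recognising that the arbitrary choice of $f$ already imposes enough structure to ignore the rest of the edges of $G$ entirely. The only subtlety worth mentioning is that selecting a fat child at each level amounts to countable choice from finite sets, which is harmless; no stronger set-theoretic principle is needed.
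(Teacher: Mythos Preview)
Your proof is correct and is essentially the standard argument for K\"onig's Infinity Lemma: use $f$ to build a rooted forest, find a root with infinitely many descendants by pigeonhole, and then inductively step to a child with infinitely many descendants.

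However, the paper does not actually prove this lemma. It is quoted as a classical result from K\"onig's book \cite{KIL} and used as a black box in the proof of Theorem~\ref{locthm}. So there is nothing to compare your argument against: you have supplied a (perfectly fine, textbook) proof where the authors simply invoke the result without proof.
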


Take $K_1\subset K_2\subset \ldots$ compact sets such that their union is the whole plane. Let each $v_n$ be a possible coloring of those finitely many triangles that intersect $K_n$ such that every point of $K_n$ is covered by both colors. In this case $V_n$ is non-empty because of Corollary \ref{cortriangle}. The function $f$ is the natural restriction to the triangles that intersect $K_{n-1}$. The infinite path gives a partition to two coverings.\hfill$\Box$

\subsection*{Remarks and acknowledgment}
We would like to thank Jean Cardinal for calling our attention to this problem.

\end{document}